\documentclass{amsart}

\usepackage{amsmath, amsthm, amssymb, amstext, amsfonts}
\usepackage{enumerate}
\usepackage{graphicx}
\usepackage[applemac]{inputenc}

\theoremstyle{plain}

\newtheorem{thm}{Theorem}[section]
\newtheorem{lem}[thm]{Lemma}

\theoremstyle{definition}

\newcommand{\sm}{\ensuremath{\smallsetminus}}

\newcommand{\isom}{\ensuremath{\cong}}

\newcommand{\es}{\ensuremath{\emptyset}}

\newcommand{\sub}{\subseteq}

\newcommand{\comment}[1]{}

\newcommand{\nat}{{\mathbb N}}

\newenvironment{txteq*}
  {
    \begin{equation*}
    \begin{minipage}[c]{0.85\textwidth} 
    \em                                
  }
  {\end{minipage}\end{equation*}\ignorespacesafterend}

\begin{document}

\title[Homogeneous $2$-partite digraphs]{\boldmath Homogeneous $2$-partite digraphs}
\author{Matthias Hamann}
\address{Matthias Hamann, Department of Mathematics, University of Hamburg, Bundes\-stra\ss e~55, 20146 Hamburg, Germany}
\date{}
\maketitle

\begin{abstract}
We call a $2$-partite digraph $D$ \emph{homogeneous} if every isomorphism between finite induced subdigraphs that respects the $2$-partition of~$D$ extends to an automorphism of~$D$ that does the same.
In this note, we classify the homogeneous $2$-partite digraphs.
\end{abstract}

\section{Introduction}

A structure is \emph{homogeneous} if every isomorphism between finite induced substructures extends to an automorphism of the whole structure.
This notion is due to Fra\"iss\'e~\cite{Fraisse1953}, see also~\cite{F-TheoryOfRelations}.
Since his work appeared, several countable homogeneous structures have been classified.
These classification results include partial orders by Schmerl~\cite{Schmerl-HomogeneousPO}, graphs by Gardiner~\cite{Gard-HomogeneousGraphs} and by Lachlan and Woodrow~\cite{LW-CountUltrahomGraphs}, tournaments by Lachlan~\cite{L-Tournaments}, directed graphs by Lachlan~\cite{L-FiniteHomDigraphs} and Cherlin~\cite{Cherlin-HomImprimitive,Cherlin-CountHomDigraphs}, bipartite graphs by Goldstern, Grossberg, and Kojman~\cite{GGK}, and, recently, ordered graphs by Cherlin~\cite{Cherlin-OrderedGraphs}.
For more details on homogeneous structures, we refer to Macpherson's survey~\cite{Macpherson-Survey}.

In this note, we classify the homogeneous $2$-partite digraphs (Theorem~\ref{thm_main}).
This classification problem occured during the classification of the countable connected-homogeneous digraphs~\cite{CountConHomDi}, where a digraph is \emph{connected-homogeneous} if every isomorphism between finite induced connected subdigraphs extends to an automorphism of the whole digraph.

\section{Preliminaries}\label{sec_basics}

In this note, a \emph{bipartite graph} is a triple $G=(X,Y,E)$ of pairwise disjoint sets such that every $e\in E$ is a set consisting of one element of~$X$ and the one element of~$Y$.
We call $VG=X\cup Y$ the \emph{vertices} of~$G$ and $E$ the \emph{edges} of~$G$.
A \emph{$2$-partite digraph} is a triple $D=(X,Y,E)$ of pairwise disjoint sets with $E\sub (X\times Y)\cup (Y\times X)$ and such that $(u,v)\in E$ implies $(v,u)\notin E$.
Again, $VD=X\cup Y$ are the \emph{vertices} of~$D$ and $E$ are the \emph{edges} of~$D$.
We write $uv$ instead of~$(u,v)$ for edges of~$D$.
A~$2$-partite digraph $(X,Y,E)$ is \emph{bipartite} if either $E\sub X\times Y$ or $E\sub Y\times X$.
The \emph{underlying undirected bipartite graph} of a $2$-partite digraph $(X,Y,E)$ is defined by
\[(X,Y,\{\{u,v\}\mid uv\in E\}).\]

Two vertices $u,v$ of a $2$-partite digraph $D=(X,Y,E)$ are \emph{adjacent} if either $uv\in E$ or $vu\in E$.
The \emph{successors} of~$u\in VD$ are the elements of the \emph{out-neighbourhood} $N^+(u):={\{w\in VD\mid uw\in E\}}$ and its \emph{predecessors} are the elements of the \emph{in-neighbourhood} $N^-(u):=\{w\in VD\mid wu\in E\}$.
For $x\in X$, we define
\[x^\perp=\{y\in Y\mid y\text{ not adjacent to }x\}\]
and, for $y\in Y$, we define
\[y^\perp=\{x\in X\mid x\text{ not adjacent to }y\}.\]

A bipartite graph $G=(X,Y,E)$ is \emph{homogeneous} if every isomorphism $\varphi$ between finite induced subgraphs $A$ and~$B$ with $(VA\cap X)\varphi\sub X$ and $(VA\cap Y)\varphi\sub Y$ extends to an automorphism $\alpha$ of~$G$ with $X\alpha=X$ and $Y\alpha=Y$.
Similarly, a $2$-partite digraph $D=(X,Y,E)$ is \emph{homogeneous} if every isomorphism $\varphi$ between finite induced subdigraphs $A$ and~$B$ with $(VA\cap X)\varphi\sub X$ and $(VA\cap Y)\varphi\sub Y$ extends to an automorphism $\alpha$ of~$D$ with $X\alpha=X$ and $Y\alpha=Y$.

A first step towards the classification of the homogeneous $2$-partite digraphs was already done when Goldstern et al.~\cite{GGK} classified the homogeneous bipartite graphs.
Thus, before moving on, we cite their result and discuss its effects towards the classification of the homogeneous $2$-partite digraphs.

\begin{thm}\label{thm_GGK}{\rm \cite[Remark 1.3]{GGK}}
A bipartite graph is homogeneous if and only if it is isomorphic to one of the following bipartite graphs:
\begin{enumerate}[{\rm (i)}]
\item a complete bipartite graph;
\item an empty bipartite graph;
\item a perfect matching;
\item the bipartite complement of a perfect matching;
\item a generic bipartite graph.\qed
\end{enumerate}
\end{thm}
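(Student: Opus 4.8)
The plan is to prove both implications of the equivalence, the forward one being routine and the substance lying in the converse. For the forward direction one verifies directly that each graph in (i)--(v) is homogeneous. If $G$ is complete or empty, then any partition-respecting bijection between finite induced subgraphs automatically preserves adjacency and non-adjacency, so it extends by \emph{any} partition-respecting bijection of the complements. If $G$ is a perfect matching or the bipartite complement of one, then a partition-respecting isomorphism between finite induced subgraphs is forced to preserve the (non-)matched pairs among the touched vertices, and one extends it by sending the remaining matching partners to one another and pairing up the rest arbitrarily. Finally, a generic bipartite graph is homogeneous by its very construction as a Fra\"iss\'e limit \cite{Fraisse1953}. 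For the converse, let $G=(X,Y,E)$ be a countable homogeneous bipartite graph. The observation used throughout is that any $k$ distinct vertices of $X$ (and, separately, of $Y$) induce the same edgeless subgraph, so homogeneity makes $\Aut(G)$ act transitively on $X$, on $Y$, and on the set of $k$-tuples of distinct vertices inside each part. Hence the cardinals $\alpha:=|N(x)|$ and $\alpha':=|x^\perp|$ for $x\in X$, and $\beta:=|N(y)|$ and $\beta':=|y^\perp|$ for $y\in Y$, are well defined, as is the whole intersection profile $(x_1,\dots,x_j)\mapsto\abs{N(x_1)\cap\dots\cap N(x_j)}$ over $j$-tuples of distinct vertices of $X$ (and dually for $Y$).

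If $\alpha=0$ or $\beta=0$ then $E=\es$ and $G$ is empty; if $\alpha'=0$ or $\beta'=0$ then $G$ is complete bipartite. So assume all four cardinals are at least~$1$, and distinguish whether they are all infinite or at least one is finite. Suppose they are all infinite. I claim $G$ has the one-point extension property: for all finite disjoint $A,B\sub X$ there is $y\in Y$ with $A\sub N(y)$ and $N(y)\cap B=\es$, and dually with the roles of $X$ and $Y$ exchanged. Indeed, were this to fail for some $A,B$, then by transitivity on $(\abs{A}+\abs{B})$-tuples of distinct vertices of $X$ it would fail for \emph{every} such configuration; but for any fixed $y\in Y$ the sets $N(y)$ and $y^\perp$ are infinite, so one may choose $\abs{A}$ vertices inside $N(y)$ and $\abs{B}$ vertices inside $y^\perp$, and these witness the property for~$y$ --- a contradiction. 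Thus the age of $G$ is the class of all finite bipartite graphs and, $G$ being homogeneous, a back-and-forth argument (Fra\"iss\'e's theorem) identifies $G$ with the generic bipartite graph.

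Now suppose one of the cardinals is finite; since passing to the bipartite complement and swapping $X$ with $Y$ both preserve homogeneity and permute $\{\alpha,\alpha',\beta,\beta'\}$ suitably, I may assume $1\le\alpha<\infty$ and $\alpha'\ge1$. If $\alpha=1$, then either $G$ is a star, in which case homogeneity forces $G=K_{1,\abs{X}}$, a complete bipartite graph, or there are $x,x'$ with $N(x)\ne N(x')$, and then an automorphism moving $x'$ onto a suitably chosen third vertex forces every $y\in Y$ to have at most one neighbour; hence $G$ is a partial matching, and --- an isolated vertex and a matched one never being interchangeable by an automorphism --- a perfect matching. If $\alpha\ge2$, set $p:=\abs{N(x)\cap N(x')}$ for distinct $x,x'\in X$. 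Every $p$-subset of $N(x_1)$ occurs as $N(x)\cap N(x_1)$ for some $x$, so if $X$ were infinite then, $N(x_1)$ having only finitely many $p$-subsets, the constancy of $\abs{N(x)\cap N(x')\cap N(x_1)}$ would force $p\in\{0,\alpha\}$; but $p=\alpha$ means all $X$-neighbourhoods coincide (so $G$ is complete bipartite, or else has isolated and non-isolated vertices on one side, impossible) and $p=0$ makes $G$ a disjoint union of stars $K_{1,\alpha}$, which is not homogeneous when $\alpha\ge2$. Hence $X$, and therefore $G$, is finite, and a direct inspection of the finite homogeneous bipartite graphs shows $G$ to be the bipartite complement of a perfect matching (or, once more, complete, empty, or a matching). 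This covers all cases.

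The step I expect to be the genuine obstacle is the finite-degree case of the previous paragraph: in the all-infinite case infinitude does the work for free, but here one must grind through the combinatorics of the constant neighbourhood-intersection profiles until the graph is forced to be finite, and then rule out sporadic finite homogeneous bipartite graphs in the spirit of how $C_5$ and the $3\times3$ rook's graph occur among the finite homogeneous graphs. Pinning down exactly when ``enough vertices are present'' for the counting argument to bite, and keeping the rigidity identities organised, is where the care goes; the remainder of the argument is soft.
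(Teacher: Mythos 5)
Note first that the paper does not prove this statement at all: it is quoted from Goldstern--Grossberg--Kojman \cite[Remark~1.3]{GGK}, so there is no in-paper argument to compare against and your attempt has to stand on its own. Much of it does: the transitivity of $\Aut(G)$ on $k$-tuples inside each part, the resulting constancy of the neighbourhood and intersection cardinals, the treatment of $\alpha=1$, and the derivation of the extension property when $\alpha,\alpha',\beta,\beta'$ are all infinite are all correct (and in the last case you do not even need Fra\"iss\'e uniqueness, since the extension property you derive is literally this paper's definition of a generic bipartite graph --- invoking uniqueness would anyway only work for countable $G$, whereas the theorem carries no countability hypothesis).

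The genuine gap is the one you yourself flag: after forcing $X$ (hence $G$, using $\beta\ge 1$) to be finite in the case $2\le\alpha<\infty$, you conclude by ``a direct inspection of the finite homogeneous bipartite graphs.'' That inspection \emph{is} the remaining content of the theorem, not a routine check: one must show that a finite homogeneous bipartite graph with $\alpha,\alpha',\beta,\beta'\ge 2$ cannot exist, which means ruling out precisely the natural candidates with constant degree and constant pairwise intersection number $p$ --- even cycles $C_{2n}$, incidence graphs of symmetric designs such as the Heawood graph, and so on. Your constant-profile machinery is the right tool (e.g.\ the Heawood graph dies because triples of collinear and non-collinear points have different triple intersections, and $p=\alpha$ dies as you say), but the argument that pins the finite survivors down to the bipartite complement of a perfect matching (essentially: show $\alpha'\ge 2$ is impossible, then apply the $\alpha'=1$ analysis to the bipartite complement) is nowhere carried out. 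As written, the converse direction is therefore incomplete exactly at the step you identified as ``the genuine obstacle''; until that case analysis is actually performed, the classification is not proved.
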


The \emph{bipartite complement of a perfect matching} is a complete bipartite graph with sides of equal cardinality where a perfect matching is removed from the edge set.
A bipartite graph $G=(X,Y,E)$ is \emph{generic} if for each two disjoint finite subsets $U_X,W_X$ of~$X$ and each two disjoint finite subsets $U_Y,V_Y$ of~$Y$ there exist $y\in Y$ and $x\in X$ with $U_X\sub N(y)$ and $V_X\cap N(y)=\es$ as well as with $U_Y\sub N(x)$ and $V_Y\cap N(x)=\es$.

\medskip

For bipartite digraphs $(X,Y,E)$, Theorem~\ref{thm_GGK} applies analogously in the following sense:
as we have either $E\sub X\times Y$ or $E\sub Y\times X$, the underlying undirected bipartite graph is homogeneous, so belongs to some class of the list in Theorem~\ref{thm_GGK}.
Conversely, every orientation of a homogeneous bipartite graph that results in a bipartite digraph gives a homogeneous bipartite digraph.
Note that homogeneous bipartite digraphs are in particular homogeneous $2$-partite digraphs.
Hence, the above classification gives us a partial classification in the case of the homogeneous $2$-partite digraphs in that it gives a full classification of the homogeneous bipartite digraphs.
In the remainder of this note we extend this partial classification by classifying those homogeneous $2$-partite digraphs that are not bipartite.

\section{The main result}\label{sec_Hom2PartiteDigraphs}

In this section, we shall prove our main theorem, the classification of the homogeneous $2$-partite digraphs (Theorem~\ref{thm_main}).

\begin{thm}\label{thm_ClassHom2Partite}\label{thm_main}
A $2$-partite digraph is homogeneous if and only if it is isomorphic to one of the following $2$-partite digraphs:
\begin{enumerate}[\rm (i)]
\item a homogeneous bipartite digraph;
\item an $M_\kappa$ for some cardinal $\kappa\geq 2$;
\item a generic $2$-partite digraph;
\item a generic orientation of a generic bipartite graph.
\end{enumerate}
\end{thm}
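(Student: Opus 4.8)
The "if" direction is routine: for each of the four families one exhibits the extending automorphisms directly. Homogeneous bipartite digraphs were handled in Section~\ref{sec_basics} via Theorem~\ref{thm_GGK}, and the generic objects in (iii) and (iv) are homogeneous by the usual back-and-forth argument applied to their defining extension properties (which must be stated carefully, in analogy with the genericity condition for bipartite graphs, and with the $M_\kappa$'s one checks homogeneity by hand, noting that an isomorphism between finite induced subdigraphs respecting the $2$-partition is determined by a partial matching of the underlying perfect-matching-type structure on each side). So the substance is the "only if" direction: let $D=(X,Y,E)$ be homogeneous and not bipartite, and show it is an $M_\kappa$, a generic $2$-partite digraph, or a generic orientation of a generic bipartite graph.

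The first step is to pass to the underlying undirected bipartite graph $G$ of $D$. Since every partition-respecting isomorphism between finite induced subdigraphs of $D$ is in particular such an isomorphism between the underlying bipartite subgraphs, $G$ is homogeneous, hence appears in the list of Theorem~\ref{thm_GGK}. One now splits into cases according to which item $G$ is. If $G$ is empty then $D$ has no edges, hence is (trivially) bipartite, contrary to assumption; similarly in each case one must use non-bipartiteness to guarantee that both ``directions'' $E\cap(X\times Y)$ and $E\cap(Y\times X)$ are non-empty. The key local tool throughout is a $3$-vertex (and occasionally $4$-vertex) transitivity argument: given a vertex $x\in X$, homogeneity forces a strong regularity on how the edges at $x$ split into successors and predecessors, because any two neighbours of $x$ in $Y$ of the same type (both successors, or both predecessors) are interchangeable by an automorphism fixing $x$, and likewise any two non-neighbours. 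This already pins down, in the matching and complement-of-matching cases, that the orientation must be ``the same everywhere'', producing the $M_\kappa$'s and the generic orientations of the perfect matching / its complement — and one checks these collapse into the families (ii) and (iv) or are excluded.

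The main case, and the main obstacle, is when $G$ is a complete bipartite graph or a generic bipartite graph: here $x^\perp$ may be empty for all $x$, so the edge set at each vertex is partitioned only into successors and predecessors, and one must show that the resulting $2$-coloured complete (or generic) bipartite pattern is itself generic as a $2$-partite digraph, i.e. satisfies the full one-point extension property in all ``colours''. The argument is a bootstrapping one: use homogeneity to move small configurations around, deduce that for every finite partial specification of successor/predecessor/non-neighbour relations on one side there is a vertex on the other side realising it, and then invoke the back-and-forth characterisation of genericity. One must be careful to separately identify the sub-case where non-neighbours never occur on one side but do on the other, which is exactly what gives item (iv) (a generic orientation of a generic bipartite graph) as opposed to item (iii). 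The bookkeeping of these sub-cases — ensuring the list is exhaustive and the families are genuinely distinct — is where the real work lies; the individual extension arguments are each short.
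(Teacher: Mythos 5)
There is a genuine gap, and it sits at the very first step of your ``only if'' direction. You claim that the underlying undirected bipartite graph $G$ of a homogeneous $2$-partite digraph $D$ is itself homogeneous, ``since every partition-respecting isomorphism between finite induced subdigraphs of $D$ is in particular such an isomorphism between the underlying bipartite subgraphs.'' That implication runs the wrong way: homogeneity of $G$ asks you to extend \emph{every} partition-respecting isomorphism between finite induced \emph{undirected} subgraphs, and such a map need not be an isomorphism of the induced subdigraphs of $D$ (it may send an edge oriented $X\to Y$ to one oriented $Y\to X$), so the homogeneity of $D$ gives you no extension for it. The paper is explicit that the homogeneity of the underlying bipartite graph is a \emph{consequence} of Theorem~\ref{thm_main}, not an input, and that the analogous statement is false for general homogeneous digraphs and their underlying graphs; so there is no cheap a priori argument to plug this hole, and building the whole case division on Theorem~\ref{thm_GGK} is not available without first proving (essentially) the theorem.

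The paper avoids this entirely: it never invokes Theorem~\ref{thm_GGK} in the ``only if'' direction, but instead argues directly from non-bipartiteness that every vertex has both predecessors and successors, proves that distinct vertices on the same side have distinct in- and out-neighbourhoods, and then splits according to whether some $|N^+(v)|$ or $|N^-(v)|$ is finite (yielding $M_\kappa$, via an intersection-cardinality argument and a fixed-point argument that exclude all other finite-degree configurations) or all are infinite, in which case Lemma~\ref{lem_hom2partite2} forces $|v^\perp|\in\{0,\infty\}$ and Lemmas~\ref{lem_hom2partite1} and~\ref{lem_hom2partite3} deliver cases (iii) and (iv). Even setting aside the flawed reduction, your proposal leaves exactly these decisive steps --- why a finite out-degree forces $M_\kappa$ rather than some other orientation pattern, and why a finite nonempty $v^\perp$ is impossible when the neighbourhoods are infinite --- as unproved ``regularity'' and ``bookkeeping,'' which is where the actual content of the classification lies.
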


For a cardinal $\kappa\geq 2$, let $M_\kappa$ be a bipartite digraph $(X,Y,E)$ with $|X|=\kappa=|Y|$ such that either $(X,Y,E\cap (X\times Y))$ or $(X,Y,E\cap (Y\times X))$ is a perfect matching and the other is the bipartite complement of a perfect matching.
In particular, the underlying undirected bipartite graph is a complete bipartite graph.

We call a $2$-partite digraph $(X,Y,E)$ \emph{generic} if its underlying undirected bipartite graph is a complete bipartite graph and if for all pairwise disjoint finite subsets $A_X,B_X\sub X$ and $A_Y,B_Y\sub Y$ there are vertices $y\in Y$ and $x\in X$ with $A_X\sub N^+(y)$ and $B_X\sub N^-(y)$ as well as $A_Y\sub N^+(x)$ and $B_Y\sub N^-(x)$.
Similarly, we call a $2$-partite digraph $(X,Y,E)$ a \emph{generic orientation of a generic bipartite graph} if for all pairwise disjoint finite subsets $A_X,B_X,C_X\sub X$ and $A_Y,B_Y,C_Y\sub Y$ there are vertices $y\in Y$ and $x\in X$ with $A_X\sub N^+(y)$, $B_X\sub N^-(y)$ and $C_X\sub y^\perp$ as well as with $A_Y\sub N^+(x)$, $B_Y\sub N^-(x)$ and $C_Y\sub x^\perp$.
It is easy to verify that its underlying undirected graph is a generic bipartite graph.

Note that standard back-and-forth arguments show that, up to isomorphism, there are a unique countable generic $2$-partite digraph and a unique countable generic orientation of the (unique) countable generic bipartite graph.

\medskip

It is worthwhile noting that by Theorem~\ref{thm_main} the underlying undirected bipartite graph of a homogeneous $2$-partite digraph is always homogeneous, which is false for arbitrary homogeneous digraphs and their underlying undirected graphs.

The fact that the listed  $2$-partite digraphs in Theorem~\ref{thm_main} are homogeneous is already discussed in the previous section for case (i), while in case (ii) it is a consequence of the fact that the bipartite complement of a perfect matching is homogeneous.
The cases (iii) and (iv) can be easily verified by the above mentioned back-and-forth argument.
(This can also be applied if they are not countable to show that they are homogeneous.)
Before we start with the remaining direction of the proof of Theorem~\ref{thm_main}, we show some lemmas.

\begin{lem}\label{lem_hom2partite2}
Let $D=(X,Y,E)$ be a homogeneous $2$-partite digraph.
If $N^+(v)$ and $N^-(v)$ are infinite and $v^\perp$ is finite for some $v\in VD$, then $v^\perp=\es$.
\end{lem}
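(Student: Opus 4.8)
The plan is to argue by contradiction: suppose $v^\perp$ is finite and nonempty, and derive a contradiction with homogeneity by exhibiting two finite configurations that are isomorphic (respecting the $2$-partition) but cannot be carried to one another by any automorphism of $D$. Fix $v\in X$ say, so that $v^\perp\sub Y$ is finite and nonempty; pick $w\in v^\perp$. Since $N^+(v)$ and $N^-(v)$ are infinite, both are nonempty, so choose $a\in N^+(v)$ and $b\in N^-(v)$; note $a,b\in Y$ and $a,b\notin v^\perp$.

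The first step is to understand the ``local type'' of the vertex $w$ relative to $v$: $w$ is the unique element of $\{w\}$ with $w\in v^\perp$. I would compare $w$ with $a$ (or with $b$). The finite induced subdigraph on $\{v,w\}$ has no edge between $v$ and $w$, whereas the finite induced subdigraph on $\{v,a\}$ has the edge $va$. So the map $w\mapsto w$, $v\mapsto v$ is not directly the obstruction; instead the idea is to use homogeneity to move $w$ while fixing enough of $N^+(v)\cup N^-(v)$ to force a contradiction. Concretely: the map $\varphi$ sending $v\mapsto v$ and $w\mapsto a$ is an isomorphism of the induced subdigraphs on $\{v,w\}$ and $\{v,a\}$ only if $\{v,w\}$ and $\{v,a\}$ carry the same edges, which they do not. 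Hence this naive approach needs adjustment, and the real argument should run the other way: use that $v^\perp$ is finite to show every vertex of $Y$ ``looks the same'' to $v$ up to finitely many exceptions, then use homogeneity to push an exception onto a non-exception.

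The key step, then, is the following. Since $v^\perp$ is finite, pick $w\in v^\perp$ and some $c\in N^+(v)$. Consider the finite subdigraphs $A$ induced on $\{w\}$ and $B$ induced on $\{c\}$; the trivial map $w\mapsto c$ is a partition-respecting isomorphism (both are single vertices of $Y$), so by homogeneity it extends to an automorphism $\alpha$ of $D$ with $X\alpha=X$, $Y\alpha=Y$ and $w\alpha=c$. Now $v^\perp\alpha$ is again the perp of $v\alpha\in X$, and $|v^\perp\alpha|=|v^\perp|$ is finite, with $w\alpha=c\in v^\perp\alpha$ but $c\notin v^\perp$; more importantly $v^\perp\alpha = (v\alpha)^\perp$. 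This does not immediately contradict anything, so the genuine device I expect to need is to fix $v$ itself: take the isomorphism between $\{v,w\}$ and $\{v,w'\}$ for two elements $w,w'\in v^\perp$ (if $|v^\perp|\geq 2$) — but for $|v^\perp|=1$ that also fails. Therefore the workable obstruction compares $\{v,w,c\}$-type configurations: using finiteness of $v^\perp$, there are infinitely many $c\in N^+(v)$ with $c^\perp$ avoiding any prescribed finite set, and one leverages this to build, via several applications of homogeneity, an automorphism fixing $v$ and moving $w$ into $N^+(v)$, contradicting that automorphisms preserve non-adjacency. I would formalize this by choosing the extension of $w\mapsto c$ above and tracking where $v$ goes; if $v\alpha\ne v$ one composes with a further automorphism (obtained from homogeneity applied to the map $v\alpha\mapsto v$, which extends since singletons in $X$ all have the same type) to return $v$ to itself while keeping $w$ off of $v^\perp$.

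The main obstacle will be making the ``type'' bookkeeping precise: single vertices of $Y$ need not all have the same orbit under $\Aut(D)$ a priori (a vertex $y$ with $y^\perp$ large versus small could differ), so before running the extension argument I must first establish, using homogeneity and the finiteness of $v^\perp$ together with the infinitude of $N^+(v),N^-(v)$, that all but finitely many vertices of $Y$ play the same role, and then that $w$ can be slotted into that common role. Once that uniformity is in hand, the contradiction — an automorphism fixing $v$ and sending a non-neighbour of $v$ to a neighbour — is immediate, forcing $v^\perp=\es$.
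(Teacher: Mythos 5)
There is a genuine gap: the central device you propose cannot work, even in principle. You aim to produce ``an automorphism fixing $v$ and moving $w$ into $N^+(v)$'' and to get a contradiction from its existence. But homogeneity only extends maps that are isomorphisms between finite \emph{induced} subdigraphs, and the map $v\mapsto v$, $w\mapsto c$ (with $w\in v^\perp$, $c\in N^+(v)$) is not such an isomorphism — as you yourself observe. Your repair — extend the singleton map $w\mapsto c$ to $\alpha$, then compose with some $\beta$ sending $v\alpha$ back to $v$ — fails for a structural reason: the composite $\gamma=\alpha\beta$ fixes $v$, hence maps $v^\perp$ onto $v^\perp$, so $\gamma(w)=\beta(c)$ necessarily lies in $v^\perp$ again. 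No amount of composing can produce an automorphism that fixes $v$ and takes a non-neighbour of $v$ to a neighbour, because such an automorphism simply does not exist; the contradiction has to be extracted differently, namely by exhibiting two configurations that \emph{are} isomorphic as induced subdigraphs but are distinguished by an $\Aut$-invariant. Your proposal never produces such a pair (the sentences beginning ``I would formalize this by\dots'' and ``The main obstacle will be\dots'' are exactly where the missing argument would have to go), and the auxiliary claim that there are infinitely many $c\in N^+(v)$ with $c^\perp$ avoiding any prescribed finite set is itself unjustified, since the hypothesis gives no information about perps of vertices on the other side.

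For comparison, the paper splits on $m:=|v^\perp|$. If $m\geq 2$: by homogeneity every $m$-subset of $Y$ occurs as $a^\perp$ for some $a\in X$, so one finds pairs $a_1,a_2$ and $b_1,b_2$ in $X$ whose perps intersect in $m-1$, respectively $m-2$, elements; both pairs induce edgeless digraphs, hence are isomorphic, but the number of common non-neighbours is an automorphism invariant, contradicting homogeneity. If $m=1$: any automorphism fixing $x$ must fix the unique $x_Y\in x^\perp$; using homogeneity on genuinely edgeless pairs in $Y$ one shows a successor $y$ of $x$ and $x_Y$ have a common predecessor $z\in X$, and taking a successor $z'\in X$ of $x_Y$, the map fixing $x$ and sending $z\mapsto z'$ \emph{is} an isomorphism of induced (edgeless) subdigraphs of $X$, so it extends; the extension fixes $x_Y$, yet it must send the edge $zx_Y$ to $z'x_Y$, which is a non-edge since $x_Yz'\in E$ — contradiction. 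Note that in both cases the finite maps fed to homogeneity are between isomorphic induced subdigraphs, and the contradiction comes from an invariant (count of common non-neighbours, or edge direction at the fixed vertex $x_Y$), which is precisely the ingredient your outline lacks.
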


\begin{proof}
Let $x\in X$.
First, let us suppose that $m:=|x^\perp|=1$.
We note that any automorphism of~$D$ that fixes~$x$ must also fix the unique element $x_Y\in x^\perp$.
Indeed, since $D$ is homogeneous and each of the two sets $\{y_1,y_2\}$ and $\{y,x_Y\}$ induces a digraph without any edge, we can extend every isomorphism between them to an automorphism $\alpha$ of~$D$ and, if x' is the common predecessor of~$y_1$ and~$y_2$, then $x'\alpha$ is the common predecessor of~$y$ and~$x_Y$.
Let $y$ be a successor of~$x$.
As $N^+(x)$ is infinite, we find two vertices $y_1,y_2$ in~$Y$ that have a common predecessor.
Homogeneity then implies that the two vertices $y$ and~$x_Y$ in~$Y$ have a common predecessor~$z$.
Let $z'$ be a successor of~$x_Y$.
By homogeneity, we find an automorphism $\beta$ of~$D$ that fixes~$x$ and maps $z$ to~$z'$.
As mentioned above, $\beta$ must fix~$x_Y$ as it fixes~$x$.
But we have $zx_Y\in E$ and $(x_Yz)\alpha=x_Yz'\in E$, which is impossible.

Now let us suppose that $|x^\perp|\geq 2$.
By homogeneity and as~$m$ is finite, we find for any subset $A$ of~$Y$ of cardinality~$m$ a vertex $a\in X$ with $a^\perp=A$.
As $Y$ is infinite, there are two subsets $A_1,A_2$ of~$Y$ of cardinality~$m$ with $|A_1\cap A_2|=m-1$ and two such subsets $B_1,B_2$ with $|B_1\cap B_2|=m-2$.
Let $a_i, b_i\in X$ with $a_i^\perp=A_i$ and $b_i^\perp=B_i$, respectively.
Then there is no automorphism of~$D$ that maps $a_1$ to~$b_1$ and $a_2$ to~$b_2$ even though $D$ is homogeneous as the number of vertices that are not adjacent to~$a_1$ and~$a_2$ is larger than the corresponding number for~$b_1$ and~$b_2$.
Analogous contradictions for any vertex in~$Y$ instead of $x\in X$ show the assertion.
\end{proof}

\begin{lem}\label{lem_hom2partite1}
Let $D=(X,Y,E)$ be a homogeneous $2$-partite digraph.
If $N^+(v)$ and $N^-(v)$ are infinite and $v^\perp=\es$ for all $v\in VD$, then $D$ is a generic $2$-partite digraph.\looseness-1
\end{lem}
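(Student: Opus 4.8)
The plan is to show that $D$ satisfies the defining extension property of a generic $2$-partite digraph, namely that for all pairwise disjoint finite $A_X,B_X\sub X$ and $A_Y,B_Y\sub Y$ there is a vertex $y\in Y$ with $A_X\sub N^+(y)$ and $B_X\sub N^-(y)$ (and symmetrically an $x\in X$); the condition that the underlying undirected bipartite graph is complete is immediate from $v^\perp=\es$ for all $v$. I would derive the extension property by induction on $n=|A_X|+|B_X|$, using homogeneity of~$D$ at each step to realize one more vertex in the desired neighbourhood pattern. The base case $n=0$ is trivial (any $y\in Y$ works, using $Y\neq\es$, which holds since $N^+(x)\neq\es$). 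For the inductive step, the idea is that from the existence of a single edge of each orientation we can, via homogeneity, copy the required adjacency pattern onto an arbitrary prescribed finite set.

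First I would record the following elementary consequence of homogeneity, proved by induction: for any finite $S\sub Y$ there is a vertex in~$X$ whose out-neighbourhood contains~$S$, and likewise one whose in-neighbourhood contains~$S$; more generally, for disjoint finite $S,T\sub Y$ there is $x\in X$ with $S\sub N^+(x)$ and $T\sub N^-(x)$. The step from $|S|+|T|=k$ to $k+1$ runs as follows: suppose we want to add a new vertex $t_0\in Y\sm(S\cup T)$ to the ``predecessor'' side, so we seek $x$ with $S\sub N^+(x)$, $T\cup\{t_0\}\sub N^-(x)$. By the inductive hypothesis pick $x_0$ with $S\sub N^+(x_0)$ and $T\sub N^-(x_0)$; since $N^-(x_0)$ is infinite, pick $t_1\in N^-(x_0)\sm(S\cup T\cup\{t_0\})$. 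The finite induced subdigraph on $S\cup T\cup\{t_0\}$ is isomorphic, via the identity on $S\cup T$ and $t_0\mapsto t_1$, to the one on $S\cup T\cup\{t_1\}$ — here one uses that $v^\perp=\es$ everywhere, so membership in the induced subdigraph only depends on which of the two orientations an edge has, and $t_0,t_1$ play symmetric roles (both are merely required to be non-adjacent to nothing relevant; any needed adjacencies among $S\cup T\cup\{t_i\}$ can be matched because there are no non-edges to worry about). Then homogeneity extends this to an automorphism $\alpha$ fixing $S\cup T$ pointwise with $t_0\alpha^{-1}=$ wait — rather, $t_1\alpha = t_0$; set $x:=x_0\alpha$. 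Then $S=S\alpha\sub N^+(x_0)\alpha=N^+(x)$, $T=T\alpha\sub N^-(x)$, and $t_0=t_1\alpha\in N^-(x_0)\alpha=N^-(x)$, as desired. The symmetric additions are handled the same way.

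With this lemma in hand, the main extension property is immediate: given disjoint finite $A_X,B_X\sub X$, apply the lemma (with the roles of $X$ and $Y$ swapped) to obtain $y\in Y$ with $A_X\sub N^+(y)$ and $B_X\sub N^-(y)$; independently obtain the required $x\in X$ for $A_Y,B_Y$. Since $A_X,B_X$ (resp. $A_Y,B_Y$) were arbitrary disjoint finite sets, and since $v^\perp=\es$ for all $v$ gives that the underlying undirected bipartite graph is complete, $D$ meets the definition of a generic $2$-partite digraph.

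The main obstacle is verifying carefully that the two induced subdigraphs on $S\cup T\cup\{t_0\}$ and $S\cup T\cup\{t_1\}$ are genuinely isomorphic via the obvious map, i.e. that swapping in $t_1$ for $t_0$ is edge-preserving in both directions. This is exactly where the hypothesis $v^\perp=\es$ is essential: it guarantees that there are no ``missing'' edges that could distinguish $t_0$ from $t_1$, so the only data to match is the orientation of edges within $S\cup T$ (unchanged) and between $S\cup T$ and the swapped vertex — and for the latter we are free because we only constrain $t_1$'s adjacencies to the finitely many vertices we have chosen it to avoid or meet, while $t_0$ was arbitrary subject to disjointness. One must state this matching precisely rather than wave at it; once that is done, the rest is a routine application of homogeneity.
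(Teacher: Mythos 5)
Your proof is correct and rests on the same mechanism as the paper's: the prescribed configuration of out- and in-neighbours lies entirely in one side of the $2$-partition, hence spans no edges, so homogeneity lets you transport it onto any edgeless set of the same size. The paper does this in one shot rather than by induction: given disjoint finite $A,B\sub X$, fix any $y\in Y$, choose $A_y\sub N^+(y)$ and $B_y\sub N^-(y)$ with $|A_y|=|A|$ and $|B_y|=|B|$ (possible since these neighbourhoods are infinite), observe that $A\cup B$ and $A_y\cup B_y$ induce empty, hence isomorphic, subdigraphs, and extend to an automorphism $\alpha$ mapping $A_y$ to $A$ and $B_y$ to $B$; then $y\alpha$ is the required vertex. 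Your vertex-by-vertex induction is a longer implementation of the same idea and is valid.

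One correction to your commentary: what you call the ``main obstacle'' is not an obstacle, and $v^\perp=\es$ is not what makes that step work. The sets $S\cup T\cup\{t_0\}$ and $S\cup T\cup\{t_1\}$ are subsets of $Y$, and in a $2$-partite digraph there are no edges inside $Y$ at all, so both induced subdigraphs are empty and any bijection fixing $S\cup T$ pointwise and sending $t_1$ to $t_0$ is automatically a partition-respecting isomorphism -- there are no adjacencies or non-adjacencies to match. The hypothesis $v^\perp=\es$ enters only where you first (correctly) invoked it: to guarantee that the underlying undirected bipartite graph is complete, which is the other half of the definition of a generic $2$-partite digraph. The extension property itself needs only homogeneity and the infiniteness of $N^+(v)$ and $N^-(v)$.
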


\begin{proof}
It suffices to show that for any two disjoint finite subsets $A$ and~$B$ of~$X$ we find a vertex $v\in Y$ with $A\sub N^+(v)$ and $B\sub N^-(v)$.
Indeed, the corresponding property for subsets of~$Y$ then follows analogously.
Note that we find for every $y\in Y$ two sets $A_y\sub N^+(y)$ and $B_y\sub N^-(y)$ with $|A|=|A_y|$ and $|B|=|B_y|$.
As~$D$ is homogeneous and as $A\cup B$ and $A_y\cup B_y$ induce (empty) isomorphic finite subdigraphs of~$D$, there exists an automorphism $\alpha$ of~$D$ that maps $A_y$ to~$A$ and $B_y$ to~$B$.
So $y\alpha$ is a vertex we are searching for.
\end{proof}

\begin{lem}\label{lem_hom2partite3}
Let $D=(X,Y,E)$ be a homogeneous $2$-partite digraph.
If $N^+(v)$, $N^-(v)$, and $v^\perp$ are infinite for all $v\in VD$, then $D$ is a generic orientation of a generic bipartite graph.
\end{lem}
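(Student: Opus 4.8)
The plan is to argue exactly as in the proof of Lemma~\ref{lem_hom2partite1}, but carrying along a third set of vertices that is to be made non-adjacent. First I would reduce the statement: by the symmetry between $X$ and $Y$, and since in the definition of a generic orientation of a generic bipartite graph the vertices $x$ and $y$ may be chosen independently of each other, it suffices to show that for all pairwise disjoint finite subsets $A,B,C$ of~$X$ there is a vertex $y\in Y$ with $A\sub N^+(y)$, $B\sub N^-(y)$ and $C\sub y^\perp$; the analogous statement for subsets of~$Y$ then follows by swapping the roles of~$X$ and~$Y$. (For any $y\in Y$ the three sets $N^+(y)$, $N^-(y)$, $y^\perp$ are pairwise disjoint and partition~$X$; by hypothesis each of them is infinite.)

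Now fix any $y_0\in Y$. Since $N^+(y_0)$, $N^-(y_0)$ and $y_0^\perp$ are infinite and pairwise disjoint, I can pick pairwise disjoint finite sets $A_0\sub N^+(y_0)$, $B_0\sub N^-(y_0)$ and $C_0\sub y_0^\perp$ with $|A_0|=|A|$, $|B_0|=|B|$ and $|C_0|=|C|$. Both $A\cup B\cup C$ and $A_0\cup B_0\cup C_0$ are subsets of~$X$, so they induce edgeless subdigraphs of~$D$; hence any bijection mapping $A_0$ to~$A$, $B_0$ to~$B$ and $C_0$ to~$C$ is an isomorphism between finite induced subdigraphs that respects the $2$-partition. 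By homogeneity it extends to an automorphism~$\alpha$ of~$D$ with $X\alpha=X$ and $Y\alpha=Y$. Setting $y:=y_0\alpha\in Y$, and using that $\alpha$ preserves and reflects edges (so $N^+(y_0)\alpha=N^+(y)$, $N^-(y_0)\alpha=N^-(y)$ and $(y_0^\perp)\alpha=y^\perp$), we obtain $A=A_0\alpha\sub N^+(y)$, $B=B_0\alpha\sub N^-(y)$ and $C=C_0\alpha\sub y^\perp$, as desired.

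This proves the reduced statement, and hence $D$ has the defining property of a generic orientation of a generic bipartite graph; its underlying undirected bipartite graph is then generic, as already observed in Section~\ref{sec_Hom2PartiteDigraphs}. I do not expect a genuine obstacle here: the argument is a routine application of homogeneity, essentially identical to Lemma~\ref{lem_hom2partite1} with the extra non-adjacency set~$C$ treated in the same way. The only points that require a (trivial) amount of care are that the hypothesis leaves us enough room in $N^+(y_0)$, $N^-(y_0)$ and $y_0^\perp$ to choose model sets $A_0,B_0,C_0$ of the prescribed sizes, and that an automorphism respecting the $2$-partition carries out-neighbourhoods, in-neighbourhoods and non-neighbourhoods to out-neighbourhoods, in-neighbourhoods and non-neighbourhoods, respectively.
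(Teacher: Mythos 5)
Your proposal is correct and follows essentially the same route as the paper's own proof: pick model sets $A_0\sub N^+(y_0)$, $B_0\sub N^-(y_0)$, $C_0\sub y_0^\perp$ of the right sizes, observe that both triples lie in~$X$ and hence induce edgeless subdigraphs, and apply homogeneity to transport $y_0$ to the desired witness. The extra care you take in noting that the bijection respects the $2$-partition and that the automorphism carries $N^+$, $N^-$ and $\perp$ to the corresponding sets is exactly what the paper leaves implicit.
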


\begin{proof}
Similarly to the proof of Lemma~\ref{lem_hom2partite1}, it suffices to show that for any three pairwise disjoint finite subsets $A,B,C$ of~$X$ we find a vertex $v\in Y$ with $A\sub N^-(v)$ and $B\sub N^+(v)$ and $C\sub v^\perp$.
For every $y\in Y$, we find subsets $A_y\sub N^+(y)$ and $B_y\sub N^-(y)$ and $C_y\sub y^\perp$ with $|A|=|A_y|$ and $|B|=|B_y|$ and $|C|=|C_y|$.
Note that each of the two sets $A\cup B\cup C$ and $A_y\cup B_y\cup C_y$ has no edge.
Applying homogeneity, we find an automorphism $\alpha$ of~$D$ that maps $A_y$ to~$A$ and $B_y$ to~$B$ and $C_y$ to~$C$.
So $y\alpha$ is a vertex that has the desired properties.
\end{proof}

Now we are able to prove our main theorem.

\begin{proof}[Proof of Theorem~\ref{thm_ClassHom2Partite}]
Let $D=(X,Y,E)$ be a homogeneous $2$-partite digraph that is not bipartite.
Then we find in~$X$ some vertex with a predecessor in~$Y$ and some vertex with a successor in~$Y$.
By homogeneity, we can map the first onto the second and conclude the existence of a vertex in~$X$ that has a predecessor and a successor in~$Y$.
Analogously, we obtain the same for some vertex of~$Y$.
By homogeneity, every vertex of~$D$ has predecessors and successors.
In particular, we have $|X|\geq 2$ and $|Y|\geq 2$.\looseness-1

Let us suppose that two vertices $u,v\in X$ have the same successors, that is, $N^+(u)=N^+(v)$.
By homogeneity, we can fix~$u$ and map $v$ onto any vertex~$w$ of~$X\sm\{u\}$ by some automorphism of~$D$ and thus obtain $N^+(w)=N^+(u)$ for every $w\in X$.
So no vertex in $N^+(u)$ has successors in~$X$, which is impossible as we saw earlier.
Hence, we have $N^+(u)\neq N^+(v)$ for each two distinct vertices $u,v\in X$.
Analogously, the same holds for each two distinct vertices in~$Y$ and also for the set of predecessors of every two vertices either in~$X$ or in~$Y$.
Thus, we have shown
\begin{equation}\label{eq_1}
N^+(u)\neq N^+(v)\quad \text{and}\quad N^-(u)\neq N^-(v)\quad\text{for all }u\neq v\in X
\end{equation}
and
\begin{equation}\label{eq_2}
N^+(u)\neq N^+(v)\quad \text{and}\quad N^-(u)\neq N^-(v)\quad\text{for all }u\neq v\in Y.
\end{equation}

{\lineskiplimit=-3pt
Let us assume that $n:=|N^+(u)|$ is finite for some $u\in X$.
Note that, for any subset $A$ of~$Y$ of cardinality~$n$, we find a vertex $a\in X$ with $N^+(a)=A$ by homogeneity.
If $|Y|>n+1$ and $n\geq 2$, then we find two subsets of~$Y$ of cardinality~$n$ whose intersection has $n-1$ elements and two such sets whose intersection has $n-2$ elements.
So we find two vertices in~$X$ with $n-1$ common successors and we also find two vertices in~$X$ with $n-2$ common successors.
This is a contradiction to homogeneity, because we cannot map the first pair of vertices onto the second pair.
Thus, we have either $n=1$ or $|Y|=n+1$.
If $|Y|=n+1$, then we directly obtain ${D\isom M_{n+1}}$ since every vertex in~$X$ also has some predecessor in~$Y$.
So let us assume ${n=1}$.
If we have $1<k\in\nat$ for $k:=|N^-(u)|$, then we obtain $D\isom M_{k+1}$, analogously.
So let us assume that either $|N^-(u)|=1$ or $N^-(u)$ is infinite.
First, we consider the case that $N^-(u)$ is infinite.
An empty set $u^\perp$ directly implies ${D\isom M_{|Y|}}$.
So let us suppose $u^\perp\neq\es$.
Let $u^+$ be the unique vertex in $N^+(u)$.
Since $u^\perp\neq\es$, we find for some and hence by homogeneity for every vertex in~$Y$ some vertex in~$X$ it is not adjacent to.
Let $w\in (u^+)^\perp$ and let $v\in N^+(u^+)$.
By homogeneity, we find an automorphism $\alpha$ of~$D$ that fixes $u$ and maps $v$ to~$w$.
Since $\alpha$ fixes $u$, it must also fix $u^+$.
But since $u^+v\in E$ and $(u^+v)\alpha=u^+w\notin E$, this is not possible.
Hence, if $N^+(u)$ is finite, it remains to consider the case $n=1=k$.
Due to (\ref{eq_1}), no two vertices of~$X$ have a common predecessor or a common successor.
Thus, also every vertex in~$Y$ has precisely one predecessor and one successor.
Let $v\in Y$ and $w\in X$ with $uv,vw\in E$.
Then we can map the pair $(u,w)$ onto any pair of distinct vertices of~$X$, as $D$ is homogeneous.
Thus, for all $x\neq z\in X$, there exists $y\in Y$ with $xy,yz\in E$.
This shows $|X|=2$ as every vertex of~$D$ has precisely one successor.
Hence, $D$ is a directed cycle of length~$4$, which is isomorphic to $M_2$.

Analogous argumentations in the cases of finite $N^-(u)$, $N^+(v)$ or $N^-(v)$ with $u\in X$ and $v\in Y$ show that the only remaining case is that every vertex in~$D$ has infinite in- and infinite out-neighbourhood.
Due to Lemma~\ref{lem_hom2partite2}, we know that $|u^\perp|$ is either $0$ or infinite and that $|v^\perp|$ is either~$0$ or infinite.
Since $x^\perp\neq\es$ if and only if $y^\perp\neq\es$ for all $x\in X$ and $y\in Y$, the assertion follows from Lemmas~\ref{lem_hom2partite1} and~\ref{lem_hom2partite3}.}
\end{proof}

\bibliographystyle{amsplain}
\bibliography{Bibs}

\end{document}